\newtheorem{maintheorem}{Theorem}
\newtheorem{teo}{Theorem}[section]
\newtheorem{df}[teo]{Definition}
\newtheorem{lem}[teo]{Lemma}
\newtheorem{rem}[teo]{Remark}
\newtheorem{question}[teo]{Question}
\newcommand{\D}{\mathbb D}
\newcommand{\Hyp}{\mathbb H}
\newcommand{\R}{\mathbb R}
\newcommand{\DD}{\mathcal D}
\newcommand{\FF}{\mathcal F}
\newcommand{\PP}{\mathcal P}
\newcommand{\RR}{\mathcal R}
\newcommand{\TT}{\mathcal T}
\newcommand{\CPun}{\mathbb{C}\mathbb{P}^1}
\newcommand{\PSLC}{PSL_2(\mathbb{C})}
\newcommand{\PSLR}{PSL_2(\mathbb{R})}
\newcommand{\tS}{\widetilde{S}}
\newcommand{\tphi}{\widetilde{\phi}}
\newcommand{\tpsi}{\widetilde{\psi}}
\newcommand{\bigslant}[2]{{\raisebox{.2em}{$#1$}\left/\raisebox{-.2em}{$#2$}\right.}}
\long\def\symbolfootnote[#1]#2{\begingroup
\def\thefootnote{\fnsymbol{footnote}}\footnote[#1]{#2}\endgroup} 
\title{Heijal's theorem for parabolic projective structures on non compact surfaces.}
\author{Nicolas Hussenot Desenonges}
\begin{document}
\maketitle
\selectlanguage{english}

\begin{abstract}
We prove that the holonomy map from the set of equivalence classes of parabolic complex projective structures on non compact surfaces to the set of equivalence classes of parabolic representations of the fundamental group of the surface to $\PSLC$ is a local biholomorphism.
\end{abstract}

\section{Introduction.}

\paragraph{Complex projective structure, monodromy map.}
Let $S$ be an oriented surface. A \textit{complex projective structure} on $S$ is a $(\PSLC,\CPun)$-structure on $S$, i.e. the data of a maximal atlas of charts mapping open sets of $S$ into $\CPun$ such that the transition functions are restrictions of M\"obius transformations. Two complex projective structures $\sigma_1$ and $\sigma_2$ are equivalent if there exists an orientation-preserving diffeomorphism of $S$ homotopic to the identity that pulls back the projective charts of $\sigma_2$ to projective charts of $\sigma_1$. We denote by $\PP(S)$ the classes of complex projective structures for the previous equivalence relation.

As M\"obius transformations are holomorphic, a complex projective structure on $S$ determines a complex structure on $S$. Hence, there is a natural map, sometimes called \textit{forgetful map}:
$$\pi:\PP(S)\longrightarrow \TT(S),$$
where $\TT(S)$ is the Teichm\"uller space of $S$.

Given a complex projective structure on $S$, we can define a pair $(\DD,\rho)$ where $\rho:\pi_1(S)\to\PSLC$ is a morphism called \textit{holonomy representation} which globalizes the transition functions and $\DD:\tS\to\CPun$ is a $\rho$-equivariant local homeomorphism\footnote{$\DD$ is holomorphic for the underlying complex structure on $S$} called \textit{developing map} which globalizes the projective charts. The map $\DD$ is well defined up to post-composition by a M\"obius transformation and the representation $\rho$ is well defined up to conjugacy by this M\"obius transformation. Hence, we have a natural map, called \textit{holonomy map}:
$$Hol:\PP(S)\longrightarrow\RR(S)$$
where $\RR(S)$ is the set of equivalence classes of morphisms $\rho:\pi_1(S)\to\PSLC$ modulo the action by conjugacy.

\paragraph{The compact case.} When $S$ is the compact oriented surface $S_g$ with genus $g\geq 2$, there are many results about the behaviour of the map $Hol$:

\begin{teo}[Poincar\'e]\label{Poincare}
In restriction to a fiber of the forgetful map $\pi$, the map $Hol$ is injective.
\end{teo}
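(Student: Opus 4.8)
The plan is to reduce the statement to one about two developing maps that share a holonomy, and then to exhibit a meromorphic quadratic differential on the closed surface whose divisor degree is incompatible with the fact that developing maps are immersions. I would set things up as follows. Fix $X\in\TT(S_g)$ and take $\sigma_1,\sigma_2\in\pi^{-1}(X)$ with $Hol(\sigma_1)=Hol(\sigma_2)$; the goal is $\sigma_1=\sigma_2$. Realize both structures on the Riemann surface $X$ itself, so that they share the universal cover $\tS$ together with the action of $\pi_1(S_g)$ by deck transformations, and choose developing maps $\DD_1,\DD_2:\tS\to\CPun$ with respective holonomies $\rho_1,\rho_2$. Since $Hol(\sigma_1)=Hol(\sigma_2)$ there is $A\in\PSLC$ with $\rho_2=A\rho_1A^{-1}$; replacing $\DD_2$ by $A^{-1}\circ\DD_2$, which still develops $\sigma_2$ because the developing map is only defined up to post-composition by a M\"obius transformation, I may assume $\rho_1=\rho_2=:\rho$. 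As two projective structures on $S_g$ with equal developing maps are equal, it then suffices to prove $\DD_1=\DD_2$.

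The key step is to introduce, assuming for contradiction that $\DD_1\not\equiv\DD_2$, the expression
$$\varphi\;=\;\frac{d\DD_1\,d\DD_2}{(\DD_1-\DD_2)^2},$$
which in a local holomorphic coordinate $z$ on $X$ (pulled back to $\tS$, chosen so that $\DD_1,\DD_2$ are finite) reads $\varphi=\dfrac{\DD_1'\,\DD_2'}{(\DD_1-\DD_2)^2}\,dz^2$. Two short computations are then needed: that $\varphi$ transforms as a quadratic differential under a change of the source coordinate, and that it is left unchanged when both $\DD_i$ are post-composed by one and the same M\"obius transformation. The first makes $\varphi$ a well-defined meromorphic quadratic differential on $\tS$ (the second serving to make sense of it near points where some $\DD_i$ takes the value $\infty$), and the two together, combined with the $\rho$-equivariance of the $\DD_i$, force $\varphi$ to be invariant under the deck group; hence $\varphi$ descends to a meromorphic quadratic differential on the compact surface $X$.

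It then remains to count zeros and poles on $X$. Each $\DD_i$ is a holomorphic local homeomorphism, hence a local biholomorphism, so $d\DD_1$ and $d\DD_2$ vanish nowhere and $\varphi$ has no zeros; and since $\DD_1\not\equiv\DD_2$ on the connected surface $\tS$, the locus $\{\DD_1=\DD_2\}$ is discrete, hence finite on $X$, and provides the only poles of $\varphi$. With $\varphi\not\equiv0$, the degree formula for a meromorphic section of $K_X^{\otimes2}$ gives
$$4g-4\;=\;\#\{\text{zeros of }\varphi\}-\#\{\text{poles of }\varphi\}\;=\;-\,\#\{\text{poles of }\varphi\}\;\le\;0,$$
which is impossible for $g\ge2$. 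Therefore $\DD_1=\DD_2$, and so $\sigma_1=\sigma_2$.

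I expect the only genuine difficulty to be finding the gadget $\varphi$ rather than any technical estimate: once the cross-ratio type combination $d\DD_1\,d\DD_2/(\DD_1-\DD_2)^2$ is written down, the invariances and the degree count are routine. The one place requiring a little care is the degenerate case $\DD_1\equiv\DD_2$, where $\varphi$ is literally the undefined symbol $0/0$ — but that case is precisely the desired conclusion, so the argument should simply be organized as a dichotomy. It is also worth remarking that no hypothesis on $\rho$ (irreducibility, non-elementarity, liftability to $SL_2(\C)$) is used; what forces the contradiction is merely that $\pi_1(S_g)$ is cocompact, i.e.\ that $4g-4>0$.
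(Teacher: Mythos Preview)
The paper does not prove this theorem; it is stated in the introduction as a classical result of Poincar\'e, with no proof supplied, and serves only as background for the parabolic analogue that is the paper's actual subject. So there is nothing to compare against.

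Your argument is correct and is in fact one of the classical proofs of Poincar\'e's theorem. The expression $\dfrac{d\DD_1\,d\DD_2}{(\DD_1-\DD_2)^2}$ is invariant under simultaneous post-composition of $\DD_1,\DD_2$ by a M\"obius transformation (so it is globally well-defined despite the $\DD_i$ hitting $\infty$), transforms as a quadratic differential in the source coordinate, and by $\rho$-equivariance descends to a nonzero meromorphic section of $K_X^{\otimes 2}$ with no zeros; the degree count $4g-4>0$ then gives the contradiction. One small point of wording: in the degree formula your $\#\{\text{poles of }\varphi\}$ must of course mean the total order of the polar divisor (a point where $\DD_1-\DD_2$ vanishes to order $k$ gives a pole of order $2k$), but this only strengthens the inequality, so nothing is lost.
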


The spaces $\PP(S)$ and $\RR(S)$ can be given the structure of a complex manifold (see preliminaries). Heijal \cite{He} proved that $Hol$ is a local homeomorphism. It was proved by Earle \cite{Ea} and Hubbard \cite{Hu} that $Hol$ is holomorphic. Hence, we have:

\begin{teo}[Heijal, Earle, Hubbard]\label{Heijal}
The map $Hol$ is a local biholomorphism.
\end{teo}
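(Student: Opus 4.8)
\emph{The plan.} The statement is the conjunction of the two results just recalled — Heijal's \cite{He} that $Hol$ is a local homeomorphism, and the results of Earle \cite{Ea} and Hubbard \cite{Hu} that $Hol$ is holomorphic — together with the soft observation that a holomorphic local homeomorphism between equidimensional complex manifolds is a local biholomorphism (any injective holomorphic map between equidimensional complex manifolds is a biholomorphism onto its open image). So the plan is to (i) record the complex manifold structures and the equality of dimensions, (ii) sketch holomorphicity, and (iii) sketch that $d_\sigma Hol$ is an isomorphism at every $\sigma$ — the real content. For (i): via the Schwarzian derivative $\PP(S_g)$ is a holomorphic affine bundle over $\TT(S_g)$ modelled on the cotangent bundle, hence a complex manifold with $\dim_{\C}\PP(S_g)=2(3g-3)=6g-6$; and near the image of $Hol$ — which consists of non-elementary (a standard property of holonomies of projective structures on closed surfaces of negative Euler characteristic), hence irreducible, representations $\rho$ — $\RR(S_g)$ is a complex manifold with $T_{[\rho]}\RR(S_g)\cong H^1\big(\pi_1(S_g);\mathfrak{sl}_2(\C)_{\mathrm{Ad}\,\rho}\big)$, of dimension $-3\chi(S_g)=6g-6$ because $H^0$ and $H^2$ vanish.

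\emph{Holomorphicity.} Relative to a reference projective structure on each Riemann surface $X$, a projective structure is recorded by a holomorphic quadratic differential $\phi$ (its Schwarzian); its developing map is the ratio $u_1/u_2$ of a basis of solutions of $u''+\tfrac12\phi\,u=0$, and its holonomy is the monodromy of this equation. Since solutions of a linear ODE depend holomorphically on the coefficients and on holomorphic deformations of the domain, and the underlying Riemann surface varies holomorphically over $\TT(S_g)$, the monodromy — hence $Hol$ — depends holomorphically on $\sigma$.

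\emph{The derivative is an isomorphism (the crux).} Fix $\sigma$, with developing map $\DD$ and holonomy $\rho$, and represent $v\in T_\sigma\PP(S_g)$ by a smooth family $\sigma_t$ with developing maps $\DD_t$ smooth in $t$. Then $\dot\DD=\partial_t|_0\DD_t$ is a vector field along $\DD$ whose failure to be $\rho$-equivariant, read through $\DD$ as an $\mathfrak{sl}_2(\C)$-valued $1$-cocycle on $\pi_1(S_g)$, represents $d_\sigma Hol(v)$. The affine-bundle structure gives an exact sequence $0\to Q(X)\to T_\sigma\PP(S_g)\xrightarrow{d\pi}T_X\TT(S_g)\to 0$, where $Q(X)=H^0(X,K_X^{2})$ is the space of holomorphic quadratic differentials on $X$; and the $\mathfrak{sl}_2$-oper underlying $\sigma$ endows $H^1$ with a canonical two-step filtration $0\subset F\subset H^1$ with $F\cong Q(X)$ and $H^1/F\cong H^1(X,\Theta_X)\cong T_X\TT(S_g)$. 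The plan is to check that $d_\sigma Hol$ is compatible with these filtrations and that: (a) on the subspace $Q(X)$ it is, by an explicit Schwarzian computation, identified with the natural isomorphism $Q(X)\xrightarrow{\,\sim\,}F$ — an infinitesimal change of Schwarzian produces a developing-map deformation whose cocycle represents the corresponding Hodge class; this is the infinitesimal form of Poincaré's Theorem~\ref{Poincare}, and in particular $d_\sigma Hol|_{Q(X)}$ is injective; (b) on the quotient it induces an isomorphism (in fact the identity) of $T_X\TT(S_g)$, because a deformation of $\sigma$ projecting to a given Beltrami class on $X$ has holonomy variation with the same Kodaira--Spencer projection. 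Since a filtered linear map that restricts to an isomorphism on the sub and induces an isomorphism on the quotient is an isomorphism, $d_\sigma Hol$ is bijective at every $\sigma$; as $Hol$ is holomorphic, the holomorphic inverse function theorem then shows that $Hol$ is a local biholomorphism.

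\emph{The main obstacle.} Holomorphicity and the dimension count are comparatively formal; the real work is the bijectivity of $d_\sigma Hol$, and within it the hard point is step (a), the infinitesimal injectivity on $Q(X)$ — the content of Poincaré's Theorem~\ref{Poincare} in infinitesimal form, proved by Heijal directly through Wronskian and maximum-principle estimates for the Schwarzian equation — together with the construction of the Hodge-type filtration on $H^1(\pi_1(S_g);\mathfrak{sl}_2(\C)_{\mathrm{Ad}\,\rho})$ from the oper and the verification that $d_\sigma Hol$ is strictly compatible with it.
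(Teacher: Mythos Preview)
Your opening paragraph is exactly how the paper handles this statement: Theorem~\ref{Heijal} is a cited background result, and the paper's entire ``proof'' is the sentence preceding it --- Heijal for the local homeomorphism, Earle and Hubbard for holomorphicity, then conclude. At that level you match the paper.

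Where you go further --- sketching the local-homeomorphism part via an infinitesimal computation of $d_\sigma Hol$ through the oper filtration on $H^1(\pi_1(S_g);\mathfrak{sl}_2(\C)_{\mathrm{Ad}\,\rho})$ --- you take a genuinely different route from the one the paper uses for its own main result (Theorem~\ref{theoprincipal}, the parabolic analogue). The paper argues geometrically: local injectivity by sliding along leaves of the suspension foliation (two nearby sections of $\Pi_\rho$ with the same holonomy are related by a leafwise holonomy map, hence define equivalent structures), and openness by perturbing the section inside a family of suspensions over a neighbourhood of $\rho_0$. Your cohomological/Hodge-theoretic approach is conceptually cleaner and explains \emph{why} the dimensions match; the paper's foliation approach is more elementary and, crucially for its purposes, adapts to the non-compact parabolic setting after compactifying the suspension. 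One soft spot in your sketch: step~(b), that $d_\sigma Hol$ induces an isomorphism on the quotient $T_X\TT(S_g)\to H^1/F$, is asserted rather than argued; making it honest requires identifying $H^1/F$ with $H^1(X,\Theta_X)$ via Griffiths transversality for the oper and then checking Kodaira--Spencer compatibility --- true, but not as immediate as your one-line justification suggests.
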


The holonomy map is not surjective. But the celebrated theorem of Gallo-Kapovic-Marden \cite{GKM} characterizes exactly the image of the holonomy map:

\begin{teo}[Gallo-Kapovic-Marden]\label{GKM}
 $Hol(\mathcal{P}(S_g))$ is exactly the classes of representations $\rho$ satisfying:
\begin{itemize}
\item $\rho$ is non elementary (i.e. there is no Borel probability measure on $CP^1$ invariant by the action of the holonomy group $\rho(\pi_1(S))$).
\item $\rho$ can be lifted to a representation $\tilde{\rho}:\pi_1(S)\rightarrow SL(2,\mathbb{C})$.
\end{itemize}
\end{teo}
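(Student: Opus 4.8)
I would prove the two inclusions separately, following the approach of \cite{GKM}; the ``necessity'' direction is elementary and the ``sufficiency'' direction carries all the difficulty.

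\emph{Necessity.} Suppose $\rho=Hol(\sigma)$ with developing map $\DD$. First I would observe that the graph of $\DD$ descends to a section $s$ of the flat $\CPun$-bundle $\tS\times_\rho\CPun\to S_g$ which, since $\DD$ is a local biholomorphism, is transverse to the flat foliation; consequently its vertical normal bundle is isomorphic to $TS_g$, so its Euler number is $\chi(S_g)=2-2g$, which is even. As the obstruction to lifting a flat $\PSLC\cong SO_3(\C)$-bundle to an $SL_2(\C)\cong\mathrm{Spin}_3(\C)$-bundle is $w_2$ --- the mod~$2$ reduction of that Euler number for a sphere bundle --- the representation $\rho$ lifts to $SL_2(\C)$. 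For non-elementarity I would argue by contradiction. An elementary subgroup of $\PSLC$ either is conjugate into $PSU(2)$, in which case $\DD^*(\text{Fubini--Study})$ descends to a metric of constant positive curvature on $S_g$, contradicting Gauss--Bonnet; or it has a finite orbit on $\CPun$, in which case, after passing to a finite cover $S'\to S_g$ and conjugating so that $\rho|_{\pi_1(S')}$ fixes $\infty$, the holomorphic $1$-form $d\DD$ descends to a meromorphic section of $K_{S'}\otimes L$ for some flat line bundle $L$, with no zeros (because $\DD$ is unbranched) and poles only at $\DD^{-1}(\infty)$ --- impossible, since $K_{S'}\otimes L$ has positive degree $2g(S')-2$.

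\emph{Sufficiency.} Fix $\rho$ non-elementary together with a lift $\tilde\rho:\pi_1(S_g)\to SL_2(\C)$. The plan is: (1)~choose a pants decomposition $S_g=\bigcup_j P_j$ along simple closed curves $\gamma_1,\dots,\gamma_{3g-3}$, each with $\rho(\gamma_i)\neq 1$, so that the restriction of $\rho$ to each $\pi_1(P_j)$ is non-elementary --- possible because a non-elementary subgroup of $\PSLC$ contains abundant loxodromic elements with pairwise disjoint fixed-point sets, the degenerate configurations (some $\rho(\gamma_i)$ central, or some subsurface group elementary) being absorbed into a larger subsurface handled directly; (2)~for each pair of pants, realize its prescribed peripheral holonomy triple $(A,B,C)$ with $ABC=1$ by an \emph{unbranched} projective structure on $P_j$, built from explicit $\rho$-adapted pieces of $\CPun$ --- half-planes, round annuli, ideal polygons --- chosen according to the conjugacy classes of $A,B,C$; (3)~glue the $P_j$ along the $\gamma_i$: the peripheral data match automatically, coming from the single $\rho$; one is free to adjust a complex twist parameter along each $\gamma_i$ and imposes no constraint on the underlying complex structure (only \emph{some} $\sigma$ is wanted); and the lift $\tilde\rho$ is used to reconcile the $\pm$-sign ambiguities coherently around the decomposition, which is precisely where liftability enters; (4)~if the assembled structure carries branch points, remove them in pairs by sliding them along loops and un-grafting, the total branching order being necessarily even because $\chi(S_g)$ is even and $\rho$ lifts.

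I expect the main obstacle to be step~(2) together with the branch-point removal in step~(4): producing honest, unbranched projective structures on pairs of pants realizing \emph{every} admissible non-elementary peripheral triple, and then proving that the branch points introduced by the gluing can always be cancelled, are the genuinely hard parts, while the combinatorial choice in step~(1) and the gluing in step~(3) are comparatively formal once the building blocks are in hand.
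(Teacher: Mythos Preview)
The paper does not contain a proof of this statement: Theorem~\ref{GKM} is quoted in the introduction as background, attributed to Gallo--Kapovich--Marden and cited as \cite{GKM}, with no argument given. There is therefore nothing in the paper to compare your proposal against.

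For what it is worth, your sketch follows the architecture of the original \cite{GKM} proof reasonably faithfully --- the necessity direction via the Euler class / Stiefel--Whitney obstruction and an elementary-holonomy case analysis, and the sufficiency direction via a carefully chosen pants decomposition, explicit projective structures on the pieces, gluing with the lift $\tilde\rho$ controlling sign ambiguities, and a final branch-point cancellation. Your own assessment of where the difficulty lies (steps~(2) and~(4)) is accurate; those are indeed the parts of \cite{GKM} that occupy most of the work. But none of this is in the present paper.
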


The holonomy map is not globally injective: given a projective structure, there is a surgery called $(2\pi-)$grafting, which produces a new projective structure without changing the holonomy representation. The following theorems of Goldman \cite{Go} and Baba \cite{Ba} say more or less that grafting is the only way to do this:

\begin{teo}\label{Goldman}
\begin{itemize}
\item Goldman: Every complex projective structure with quasi-Fuschian holonomy representation can be obtained by grafting the uniformizing projective structure. 
\item Baba: Given two projective structures with the same purely loxodromic holonomy representation, we can pass from one to the other by grafting and ungrafting.
\end{itemize}
\end{teo}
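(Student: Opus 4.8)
These two statements are quoted from Goldman \cite{Go} and Baba \cite{Ba}; here is the line of attack I would take for each.

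For Goldman's part, let $\sigma$ be a projective structure on $S_g$ with quasi-Fuchsian holonomy $\rho$ and developing map $\DD\colon\tS\to\CPun$. The group $\rho(\pi_1(S_g))$ has a Jordan-curve limit set $\Lambda$, and $\CPun\setminus\Lambda=\Omega^+\sqcup\Omega^-$ consists of two Jordan domains, each uniformizing a conformal copy $X^\pm$ of $S_g$. The first step is to analyse the closed set $\Gamma:=\DD^{-1}(\Lambda)\subset\tS$: since $\DD$ is a local homeomorphism and $\Lambda$ an embedded circle, $\Gamma$ is a $\rho$-invariant properly embedded $1$-submanifold, and an Euler-characteristic count on the closed quotient shows that its image in $S_g$ is a (possibly empty) multicurve $\gamma$. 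The crucial --- and hardest --- step is the local analysis of $\DD$ along $\Gamma$: one must rule out \emph{folding}, i.e.\ show that the two sides of each component of $\Gamma$ are developed into different components $\Omega^+$, $\Omega^-$, and deduce that $\DD$ maps the closure of each component of $\tS\setminus\Gamma$ homeomorphically onto $\overline{\Omega^+}$ or $\overline{\Omega^-}$ (here simple connectivity of $\Omega^\pm$ upgrades a proper local homeomorphism to a homeomorphism). Granting this, cutting $S_g$ along $\gamma$ exhibits it as subsurfaces projectively isomorphic to subsurfaces of $X^+$ and $X^-$, glued along annuli that $\DD$ wraps an integer number of times around $\Lambda$; this is exactly the data of a $2\pi$-grafting of the uniformizing structure along $\gamma$ with the corresponding integral weights.

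For Baba's part, suppose $\sigma_1,\sigma_2$ carry the same purely loxodromic holonomy $\rho$; having no parabolics, $\rho$ is non-elementary and $SL(2,\C)$-liftable, consistent with Theorem~\ref{GKM}, and has controlled dynamics on $\CPun$. The plan is to attach to each $\sigma_i$ its canonical Thurston-type decomposition --- a measured lamination together with an underlying hyperbolic/complex structure --- and then compare the two developing maps, which are $\rho$-equivariantly homotopic. The discrepancy between the two sets of bending data should be an integral move supported on a multicurve, which one converts into a finite sequence of elementary graftings of $\sigma_1$ and ungraftings of $\sigma_2$ after which the two structures become projectively isomorphic. I expect this to be where essentially all the work lies: realizing these moves along disjoint admissible curves and proving that the procedure terminates is delicate, and I would follow \cite{Ba} rather than reprove it.
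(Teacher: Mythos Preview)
The paper does not contain a proof of this theorem at all: it is stated in the introduction purely as background, with the proofs delegated to the references \cite{Go} and \cite{Ba}. So there is no ``paper's own proof'' to compare your proposal against, and nothing like your sketch is needed for the results the paper actually establishes.

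That said, your outline of Goldman's argument is essentially the correct one and tracks \cite{Go} closely: pull back the limit set $\Lambda$ via the developing map, show the preimage descends to a multicurve, rule out folding so that complementary components develop homeomorphically onto $\Omega^{\pm}$, and read off the grafting data. The ``no folding'' step is indeed where the work is. For Baba's part you are honest that you would defer to \cite{Ba}; your description of attaching Thurston-type data and comparing integral bending discrepancies is in the right spirit, but as written it is a plan rather than a proof, and the actual argument in \cite{Ba} involves substantial train-track and pleated-surface machinery that your sketch does not really touch. In the context of this paper, though, none of that is required: the theorem is only being quoted to motivate the questions at the end of the introduction.
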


\paragraph{The parabolic case.} In this paper, we study the following natural question: if instead of considering the compact surface with genus $g\geq 2$, we consider the compact surface with genus $g$ and $n$ points deleted (with $3g-3+n>0$), are the theorems of the previous part still valid? In this general context, the answer is no! For example, the space of complex projective structure $\PP(S_{g,n})$ on $S_{g,n}$ is infinite dimensional whereas the space of representations of $\pi_1(S_{g,n})$ in $\PSLC$ modulo conjugacy is finite dimensional. We deduce immediatly that the analogous of Heijal's and Poincar\'e's theorems cannot be true. So we need to restrict our space $\PP(S_{g,n})$ in order to hope something. It is the goal of the following definition:
\begin{df}\label{defpara}
A complex projective structure on $S_{g,n}$ is said to be parabolic if for any puncture $p$, there is a local coordinate $z$ around $p$ and a M\"obius transformation $A $ such that $A\circ \mathcal{D}(z)=\frac{1}{2i\pi}\log z$.
\end{df}
In the previous definition, the developing map has to be thought as a multivalued map $\DD:S_{g,n}\rightarrow\CPun$. The set of parabolic complex projective structures modulo the equivalence relation defined at the beginning of the introduction, is denoted by $\PP_{para}(S_{g,n})$. 

Note that for a parabolic complex projective structure, in the local coordinate $z$ around a puncture $p$ given by definition \ref{defpara}, we have $A\circ \mathcal{D}(e^{2i\pi}\cdot z)=A\circ\DD(z)+1$. Hence $\mathcal{D}(e^{2i\pi}\cdot z)=A^{-1}\circ T_1\circ A\circ\DD(z)$ where $T_1:z\mapsto z+1$ is the translation. This means that the image of an element of $\pi_1(S_{g,n})$ which makes one loop around a puncture is a parabolic element of $\PSLC$ (i.e. conjugated to $z\mapsto z+1$). Such a representation is called parabolic and the quotient of all such representations by the conjugacy action is denoted by $\RR_{para}(S_{g,n})$. Hence we have a map: 
$$Hol:\PP_{para}(S_{g,n})\longrightarrow\RR_{para}(S_{g,n}).$$

The analogous of Poincar\'e's theorem has been proved by I. Kra \cite{Kr}, namely:

\begin{teo}[Kra]
In restriction to a fiber of the forgetful map $\pi$, the map $Hol$ is injective.
\end{teo}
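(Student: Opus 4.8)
The plan is to adapt Poincar\'e's proof of Theorem \ref{Poincare} to the punctured setting, keeping careful track of the cusps. Fix $X\in\TT(S_{g,n})$ with compactification $\bar X$ and punctures $p_1,\dots,p_n$, and let $\sigma_1,\sigma_2$ be two parabolic structures in $\pi^{-1}(X)$ with the same holonomy $\rho$. Since the Schwarzian derivative of a developing map is unchanged under post\nobreakdash-composition with a M\"obius transformation, it descends to a meromorphic projective connection on $X$, and the difference $q:=\sigma_2-\sigma_1$ of the two projective connections is a genuine meromorphic quadratic differential on $X$; showing $q=0$ is the same as showing $\sigma_1=\sigma_2$. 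Near each $p_j$, Definition \ref{defpara} gives a local coordinate in which the developing map is $\frac{1}{2i\pi}\log z$ up to a M\"obius map, and the Schwarzian of $\frac{1}{2i\pi}\log z$ is $\frac{1}{2z^2}\,dz^2$; since the good coordinate for $\sigma_1$ and the good coordinate for $\sigma_2$ near $p_j$ are related by a biholomorphism fixing $p_j$, a short computation shows that the double\nobreakdash-pole parts of the two Schwarzians agree in a common coordinate, so that $q$ extends to a meromorphic quadratic differential on $\bar X$ with at worst a simple pole at each $p_j$; thus $q$ lies in the finite\nobreakdash-dimensional space $H^0\bigl(\bar X,K_{\bar X}^{2}(p_1+\dots+p_n)\bigr)$ of dimension $3g-3+n$.

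Now suppose $q\neq0$. After a conjugation I would fix developing maps $\DD_1,\DD_2:\widetilde X\to\CPun$ equivariant under one and the \emph{same} $\rho$, and consider the coincidence locus $Z=\{x\in\widetilde X:\DD_1(x)=\DD_2(x)\}$. This is an analytic subset of $\widetilde X$, and it is $\pi_1$\nobreakdash-invariant since both maps are $\rho$\nobreakdash-equivariant, so it descends to an analytic subset $\bar Z\subseteq X$. As $\widetilde X$ is connected, $\bar Z$ is either all of $X$ — in which case $\DD_1=\DD_2$, hence $\sigma_1=\sigma_2$, contradicting $q\neq0$ — or a discrete set. Equivalently, the (a priori multivalued) comparison map $F:=\DD_2\circ\DD_1^{-1}$ of $\DD_1(\widetilde X)\subseteq\CPun$ into $\CPun$ is not the identity, and it satisfies $F\circ\rho(\gamma)=\rho(\gamma)\circ F$ on the appropriate branches. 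The goal is to prove that $F$ must in fact be single\nobreakdash-valued: it then extends to a holomorphic, hence M\"obius, self\nobreakdash-map of $\CPun$ commuting with the whole holonomy group, and for a non\nobreakdash-elementary $\rho$ the only such map is the identity. Here one also needs, and would verify along the way, the parabolic analogue of the relevant part of Theorem \ref{GKM}, namely that the holonomy of a parabolic projective structure is non\nobreakdash-elementary.

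The hard part is precisely this single\nobreakdash-valuedness, equivalently ruling out the case $\bar Z$ discrete: the only obstruction to it is the possible non\nobreakdash-injectivity of the developing map $\DD_1$, and the parabolic hypothesis is what one should use to control it. In a punctured disc around each $p_j$ the explicit $\log$ model makes $\DD_1$ injective, so $F$ is single\nobreakdash-valued there, and using that $\rho$ carries the loop around $p_j$ to a fixed parabolic element of $\PSLC$ one checks that near $p_j$ the two developing maps coincide exactly when the coordinate change between the two good coordinates is linear; if this happens at even one cusp, then $Z$ contains a nonempty open set and $\bar Z=X$, and we are done. In the remaining case the developing maps disagree near every cusp, and one must run a monodromy\nobreakdash-killing argument across the interior of $X$, propagating the M\"obius germs produced at the cusps through the analytic continuation of $F$, past the branching coming from the non\nobreakdash-injectivity of $\DD_1$, while simultaneously handling the regular\nobreakdash-singular behaviour at the $n$ punctures of the second\nobreakdash-order $SL_2(\C)$\nobreakdash-equation whose projectivized monodromy is $\rho$ and whose potential has the simple poles of $q$. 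This global step is where the non\nobreakdash-compactness genuinely changes the classical argument, and it is the main obstacle; an infinitesimal Eichler\nobreakdash-cohomology computation showing that no nonzero such $q$ is tangent to the fiber of $Hol$ at $\sigma_1$ would only recover local injectivity, so the global statement really requires this more delicate analysis.
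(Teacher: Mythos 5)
This statement is quoted in the paper as Kra's theorem (cited from \cite{Kr}) and is not proved there, so there is no internal argument to compare with; judged on its own terms, your proposal is not a proof but a plan whose central step is missing. The preliminary reduction is fine: since $\sigma_1,\sigma_2$ lie in the same fiber $\pi^{-1}(X)$, the difference of Schwarzians is a genuine quadratic differential $q$ on $X$, and the parabolicity condition (the $\frac{1}{2z^2}$ leading term recalled in section \ref{paradim}) shows $q$ has at worst simple poles at the punctures, so it lives in a space of dimension $3g-3+n$. But this finite-dimensionality gives nothing by itself: injectivity of $Hol$ on the fiber is exactly the statement that $q=0$, and the only route you offer to $q=0$ is the comparison-map argument, whose decisive step --- showing that $F=\DD_2\circ\DD_1^{-1}$ is single-valued on $\DD_1(\widetilde X)$ despite the non-injectivity of $\DD_1$, your ``monodromy-killing argument across the interior of $X$'' --- you explicitly leave as ``the main obstacle.'' That step is precisely the content of Poincar\'e's and Kra's theorems, so the proposal assumes what it must prove.

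Two further points in the sketch would need repair even if the global step were supplied. First, the local analysis at a cusp is shakier than you state: having $\DD_1=\DD_2$ near one cusp indeed forces $\DD_1\equiv\DD_2$ by analytic continuation on the connected cover, but your criterion ``the developing maps coincide exactly when the coordinate change between the two good coordinates is linear'' is not justified, and in any case the generic situation is the other branch of your dichotomy, where nothing has been gained. Second, even granting single-valuedness of $F$ on the open set $\DD_1(\widetilde X)$, the extension of $F$ to a M\"obius transformation of all of $\CPun$ is not automatic (a holomorphic map defined on a proper open subset and commuting with $\rho(\pi_1)$ on that subset does not immediately extend); this also requires an argument, e.g.\ via the limit set of the non-elementary group $\rho(\pi_1(S_{g,n}))$, whose non-elementarity you correctly note is needed and is supplied in the paper by \cite[Lemma 10]{CDFG}. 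As it stands, the proposal identifies the right objects but has a genuine gap at its core; the honest conclusion of your last paragraph is that the theorem has not been proved.
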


As in the compact case, the two spaces $\PP_{para}(S_{g,n})$ and $\RR_{para}(S_{g,n})$ can be endowed with a structure of complex manifolds (see section \ref{paradim}). The goal of this paper is to prove that the analogous of Heijal's theorem also holds:

\begin{maintheorem}\label{theoprincipal}
The holonomy map $Hol$ is a local biholomorphism.
\end{maintheorem}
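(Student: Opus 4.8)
The plan is to combine the dimension count of Section \ref{paradim} with an infinitesimal rigidity statement, and then to invoke the holomorphic inverse function theorem. Recall from Section \ref{paradim} that $\PP_{para}(S_{g,n})$ and $\RR_{para}(S_{g,n})$ are complex manifolds of the same dimension $6g-6+2n$: on the structure side $\PP_{para}(S_{g,n})$ is an affine bundle over $\TT(S_{g,n})$ modeled on the quadratic differentials holomorphic on the compactification $\overline{S_{g,n}}$ with at most simple poles along the puncture divisor $D=p_1+\dots+p_n$, while on the representation side the $n$ parabolicity constraints cut out a smooth subvariety of codimension $n$ in the character variety of the free group $\pi_1(S_{g,n})$ --- smoothness coming from the non-elementarity of parabolic holonomies together with the fact that the normalization $A\circ\DD(z)=\frac{1}{2i\pi}\log z$ forces the peripheral holonomies to be genuine (non-trivial) parabolics. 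Granting this, the theorem reduces to two facts: (i) $Hol$ is holomorphic, and (ii) its differential $dHol_\sigma$ is injective at every $\sigma$. Indeed, (i) and (ii) together with the equality of (finite) dimensions force $dHol_\sigma$ to be a linear isomorphism at each point, whence the holomorphic inverse function theorem yields that $Hol$ is a local biholomorphism.

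Fact (i) I would prove along the lines of the Earle--Hubbard treatment of the compact case: expressing a parabolic projective structure through its Schwarzian derivative relative to the underlying Riemann surface, one checks that the developing map, and hence the holonomy, depends holomorphically on the structure. The only genuinely new point is uniform control of these objects near the punctures, and this is exactly where the rigid local model of Definition \ref{defpara} helps: near each puncture the structure is, up to a M\"obius transformation, the fixed map $z\mapsto\frac{1}{2i\pi}\log z$, so the parameter dependence is confined to the thick part of the surface and the standard estimates apply.

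For Fact (ii), fix $\sigma\in\PP_{para}(S_{g,n})$ with underlying punctured Riemann surface $X=\overline{S_{g,n}}\setminus D$ and holonomy $\rho=Hol(\sigma)$, and take $\xi\in\ker dHol_\sigma$. Represent $\xi$ by a smooth path $\sigma_t$ of parabolic projective structures with $\sigma_0=\sigma$; since $\frac{d}{dt}\rho_t|_{t=0}$ is a coboundary, after an infinitesimal conjugation we may assume $\frac{d}{dt}\rho_t|_{t=0}=0$. Choosing developing maps $\DD_t\colon\tS\to\CPun$ depending smoothly on $t$, all $\rho$-equivariant, the field $v:=(d\DD_0)^{-1}\,\frac{d}{dt}\DD_t|_{t=0}$ is a holomorphic vector field on $\tS$ which, by $\rho$-equivariance, descends to a holomorphic vector field $\bar v$ on $X$; and $\xi$ is then represented by the infinitesimal isotopy generated by $\bar v$. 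Since the deformation keeps the structure parabolic with the normalization of Definition \ref{defpara} --- in particular it fixes each puncture --- the flow of $\bar v$ fixes the $p_i$ and is tame there, so $\bar v$ extends to a holomorphic vector field on $\overline{S_{g,n}}$ vanishing along $D$, i.e. $\bar v\in H^0\!\big(\overline{S_{g,n}},\Theta_{\overline{S_{g,n}}}(-D)\big)$. But $\deg\big(\Theta_{\overline{S_{g,n}}}(-D)\big)=2-2g-n=-(2g-2+n)<0$, the inequality $2g-2+n>0$ being a consequence of the standing hypothesis $3g-3+n>0$; hence $\bar v\equiv0$, the isotopy is trivial, and $\xi=0$. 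This establishes (ii), and with it the theorem.

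The main obstacle lies entirely at the punctures. Setting up the complex-manifold structures and the dimension formula of Section \ref{paradim} with enough precision, proving the holomorphicity of Fact (i) with uniform estimates near the ends, and --- most delicately --- justifying in Fact (ii) that a holonomy-trivial deformation of a \emph{parabolic} structure is genuinely tame at $D$ (that no ``wild'' deformation escaping $\PP_{para}(S_{g,n})$, or one opening up a cusp, can hide in $\ker dHol_\sigma$) all demand a careful local analysis of the $\frac{1}{2i\pi}\log z$ model and of the parabolic deformations compatible with it. Once that local picture is under control, the global argument is a transcription of the classical Heijal--Earle--Hubbard scheme, with $\Theta_{\overline{S_{g,n}}}(-D)$ playing the role that $\Theta_{S_g}$ plays in the compact case.
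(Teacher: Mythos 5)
Your proposal follows a genuinely different route from the paper: the paper proves local injectivity of $Hol$ directly, by a geometric argument transporting sections of the suspended foliation and matching them with the normalized charts at the cusps (Lemma 3.2), and gets openness either from holomorphy plus equal dimensions plus local injectivity, or from a deformation of the compactified suspension; you instead propose to prove injectivity of $dHol$ and invoke the inverse function theorem. That strategy is legitimate in principle, but as written it has a genuine gap at its central step. You assert that $v:=(d\DD_0)^{-1}\frac{d}{dt}\DD_t|_{t=0}$ is a \emph{holomorphic} vector field. This is unjustified and in general false: each $\DD_t$ is holomorphic only for the complex structure $X_t$ underlying $\sigma_t$, and differentiating the relation $\bar\partial\DD_t=\mu_t\,\partial\DD_t$ at $t=0$ gives $\bar\partial v=\dot\mu_0$, the first-order variation of the Beltrami coefficient. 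So $v$ is holomorphic only when the underlying point of $\TT(S_{g,n})$ does not move to first order, i.e.\ only for vectors tangent to a fiber of the forgetful map. A general element of $\ker dHol_\sigma$ has, a priori, a nonzero Teichm\"uller component, and showing that it must vanish is precisely the content of Heijal-type rigidity; your argument assumes it. (With $v$ holomorphic granted, your degree count proves the infinitesimal version of Kra's theorem, not of Theorem A.)

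The argument can plausibly be repaired in two steps: from $\bar\partial v=\dot\mu_0$ deduce that the Teichm\"uller component of $\xi$ vanishes, then rerun the construction with a path inside the fiber to get a genuinely holomorphic $v$ and apply the degree argument. But both steps hinge on exactly the points you defer: for the punctured surface the tangent space to $\TT(S_{g,n})$ is computed by classes on the compactification (sections of $\Theta(-D)$-type coefficients), so killing $[\dot\mu_0]$ requires showing that $v$, built from $\dot\DD_0$, has the right behaviour at the cusps, which demands an actual estimate of the variation in the $\frac{1}{2i\pi}\log z$ model; likewise your claim that $\bar v$ extends over $D$ vanishing at the punctures, and the final implication ``$\bar v\equiv 0\Rightarrow\xi=0$'' (vanishing of the variation of the quadratic differential, with its $\frac{1}{2z^2}$ leading term preserved), are stated but not proved. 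Since these are the places where the non-compactness and parabolicity actually enter, the proposal as it stands is an outline of a possible alternative proof rather than a proof. A minor additional point: the manifold structure on the representation side is only on the non-elementary locus $\RR^{ne}_{para}(S_{g,n})$ (the paper quotes the fact that parabolic projective structures have non-elementary holonomy), so your target space should be stated accordingly.
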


\paragraph{Questions.}

\begin{question}
Is the analogous of Gallo-Kapovic-Marden theorem \ref{GKM} true in this context? The fundamental group of $S_{g,n}$ (with $n>0$) being a free group, any morphism $\rho:\pi_1(S_{g,n})\to \PSLC$ can be lifted to a morphism $\tilde{\rho}:\pi_1(S_{g,n})\rightarrow SL(2,\mathbb{C})$. Hence the precise question is: given a non-elementary parabolic representation $\rho$, does there exist a parabolic complex projective structure with holonomy $\rho$? See also \cite[Problem 12.2.1]{GKM} for a question in the same vein (but slightly different).
\end{question}

\begin{question}
Are the analogous of Goldman's and Baba's theorems \ref{Goldman} true in our context?
\end{question}



\section{Preliminaries.}\label{Prel}

\paragraph {Complex projective structures.} Let $S$ be an oriented surface. A \textit{complex projective structure} on $S$ is the data of a pair $(\DD,\rho)$ where $\rho:\pi_1(S)\to\PSLC$ is a morphism and $\DD:\tS\to\CPun$ is a $\rho$-equivariant local homeomorphism. Two complex projective structures $(\DD_1,\rho_1)$ and $(\DD_2,\rho_2)$ are said to be equivalent if there exists a $\pi_1(S)$-equivariant homeomorphism $\widetilde{\phi}$ of $\widetilde{S}$\footnote{$\widetilde{\phi}\circ\gamma=\gamma\circ\widetilde{\phi}$, $\forall\gamma\in\pi_1(S)$} and a M\"obius transformation $A$ such that:
\begin{itemize}
\item $\DD_2=A\circ\DD_1\circ\tphi$
\item $\rho_2=A\circ\rho_1\circ A^{-1}$
\end{itemize}
The set of equivalence classes of complex projective structures on $S$ is denoted by $\PP(S)$. It is a classical fact that this definition is equivalent to the one given in the introduction.

\paragraph {Parabolic type projective structure.} Let $S_{g,n}$ be the topological surface obtained from the oriented compact surface of genus $g$ by deleting $n$ points. Fix a universal covering map: $proj:\widetilde{S_{g,n}}\longrightarrow S_{g,n}$. Let $(\DD,\rho)$ be a complex projective structure on $S_{g,n}$. Thinking $\DD$ as a multivalued map, $(\DD,\rho)$ is said to be \emph{parabolic} at a puncture $p$ if there is a neighborhood $V$ of $p$, a homeomorphism $\phi:V\to \mathbb{D}^*$ and a M\"obius transformation $A$ such that:
$$A\circ\DD\circ\phi^{-1}(z)=\frac{1}{2i\pi}\log z.$$
Equivalently, $(\DD,\rho)$ is parabolic at $p$ if there exists:
\begin{itemize}
\item a neighborhood $V=\bigslant{C}{<\gamma>}$ of $p$, where $C$ is a connected component of $proj^{-1}(V)$ and $\gamma\in\pi_1(S_{g,n})$,
\item a homeomorphism $\tphi:C\to \Hyp$ with $\tphi\circ\gamma=T_1\circ\tphi$ where $T_1:z\mapsto z+1$ is the translation.
\item a M\"obius transformation $A$
\end{itemize} 
satisfying
$$A\circ\DD\circ\tphi^{-1}=\iota:\Hyp\hookrightarrow\CPun.$$
A complex projective structure on $S_{g,n}$ is said to be \textit{parabolic} if it is parabolic at all the punctures. The set of parabolic type projective structures on $S_{g,n}$ is denoted by $\PP_{para}(S_{g,n})$.

\paragraph{Complex structures on the spaces $\PP_{para}(S_{g,n})$ and $\RR_{para}^{ne}(S_{g,n})$ and computation of their dimensions.\newline}\label{paradim}

Complex structure and dimension of $\PP_{para}(S_{g,n})$? For a more detailed explanation of the complex structure, we refer the reader to \cite[section 3]{Du}. Let us fix a complex structure $X$ on $S_{g,n}$, i.e. $X\in\TT(S_{g,n})$. The set of equivalent classes of complex projective structures on $S_{g,n}$ whose underlying complex structure is $X$ is parameterized by the set of holomorphic quadratic differentials on $X$. To see this, write $X=\Hyp/\Gamma$ where $\Gamma$ is a discrete subgroup of $\PSLR$. If $(\DD,\rho)$ is a complex projective structure on $S_{g,n}$ whose underlying complex structure is $X$, then $\DD:\Hyp\to\CPun$ is local biholomorphism. Hence, if $S_{\mathcal{D}}:=\left(\frac{\mathcal{D}''}{\mathcal{D}'}\right)'-\frac{1}{2}\left(\frac{\mathcal{D}''}{\mathcal{D}'}\right)^2$ is the Schwarzian derivative of $\DD$, by the $\rho$-equivariance of $\mathcal{D}$, the quadratic differential $S_{\mathcal{D}}(z)\cdot dz^2$ is invariant by the action of $\Gamma$ and then descends to a quadratic differential $\Phi$ on $X$. Moreover, it can be proved (see \cite[Theorem 9.1.1]{HPSG}) that the projective structure is parabolic at a puncture $p$ if and only if, in one (and hence in all) holomorphic coordinate sending $p$ to $0$, $\Phi$ has the following Laurent's series expansion:
$$\Phi=\left(\frac{1}{2z^2}+\frac{a_{-1}}{z}+a_0+\cdots\right)dz^2.$$
Hence if $X\in\TT(S_{g,n})$, the fiber $\pi^{-1}(X)$ of the forgetful map $\pi:\PP(S_{g,n})\to \TT(S_{g,n})$ is a complex affine space directed by the vectorial space of quadratic meromorphic differential on $\overline{X}$ (the compactification of $X$) with poles of order at most $1$ at the $n$ punctures and no other poles. By Riemann-Roch theorem, this set has complex dimension $3g-3+n$ (see \cite[paragraph 6.1]{DD}). As $\TT(S_{g,n})$ is also a complex manifold with complex dimension $3g-3+n$, we deduce that $\mathcal{P}(S_{g,n})$ is a complex manifold with complex dimension $6g-6+2n$. There is also a natural topology on $\PP(S_{g,n})$: we give the set of couples of maps $(\DD,\rho)$ the compact-open topology, and $\mathcal{P}(S_{g,n})$ inherits a quotient topology. As it is mentioned in \cite[section 3.3]{Du}, note that this topology is the topology coming from the complex structure previously defined.\newline 

Complex structure and dimension of $\RR_{para}^{ne}(S_{g,n})$? $\pi_1(S_{g,n})$ is a free group with $2g+n-1$ generators ($2$ generators $a_i$ and $b_i$ for each of the $g$ handles and $n-1$ generators $\alpha_i$ corresponding to $n-1$ punctures choosen among the $n$ punctures) and $\PSLC\sim SO_3(\mathbb{C})\subset\mathbb{C}^9$. Hence, the set $Hom(\pi_1(S_{g,n}),\PSLC)$ has the structure of an affine complex algebraic variety given by the identification:
\begin{align*}
Hom(\pi_1(S_{g,n}),\PSLC)&\longrightarrow\PSLC^{2g+n-1}\\
\rho&\longmapsto(\rho(a_1),\cdots,\rho(b_g),\rho(\alpha_1),\cdots,\rho(\alpha_{n-1}))
\end{align*}
This variety has complex dimension $3\cdot(2g+n-1)$. Define $Hom_{para}(\pi_1(S_{g,n}),\PSLC)$ as the set of representations $\rho$ such that $\rho(\alpha_i)$ is a parabolic matrix for $i\in\{1,\cdots,n\}$. This algebraic subset is given by $n$ algebraic equations ($Tr^2(\rho(\alpha_i))=4$), hence it has dimension $ 3\cdot(2g+n-1)-n=6g+2n-3$. Now $\PSLC$ acts algebraically on $Hom_{para}(\pi_1(S_{g,n}),\PSLC)$. Hence the quotient $\RR_{para}(S_{g,n})$ has $3$ dimensions less, i.e. it has dimension $3g+2n-6$. It is classical that the quotient $\RR_{para}(S_{g,n})$ is not Hausdorff but the subset $\RR^{ne}_{para}(S_{g,n})$ (consisting of these equivalence classes of representations which are non-elementary) is Hausdorff, hence is a complex manifold.\newline

Note that the holonomy of a parabolic complex projective structure is always non-elementary (see \cite[Lemma 10]{CDFG}). Hence we have the well defined map:
 $$Hol:\PP_{para}(S_{g,n})\longrightarrow\RR^{ne}_{para}(S_{g,n})$$
between complex manifolds of dimension $6g-6+2n$. According to \cite{Ea}, the map $Hol$ is holomorphic (see also \cite{Hu}).

\paragraph{Suspension.} Let $S$ be a smooth surface and $\rho:\pi_1(S)\to\PSLC$ be a group morphism. There is a classical construction which associates to $\rho$ a $\CPun$-fiber bundle over $S$ and a horizontal foliation. The construction is the following: $\pi_1(S)$ acts on the product $\widetilde{S}\times\CPun$ by $\gamma\cdot(\tilde{x},z)=(\gamma\cdot \tilde{x},\rho(\gamma)\cdot z)$.  This action is free and properly discontinuous. Hence the quotient $M_{\rho}=\widetilde{S}\times\CPun/\pi_1(S))$ is a manifold which fibers over $S$ via the projection $\Pi_{\rho}:[\tilde{x},z]\to[\tilde{x}]$ and whose fibers are copy of $\CPun$. The horizontal foliation on the product $\widetilde{S}\times\CPun$ whose leaves are the sets $\widetilde{S}\times\{z\}$ is globally preserved by the action of $\pi_1(S)$. Then, it descends to a foliation $\FF_{\rho}$ on $M_{\rho}$ transverse to the fibers. 

If there exists a smooth section $\sigma:S\rightarrow M_{\rho}$ of $\Pi_{\rho}$ transverse to the foliation, then $S$ is naturally endowed with a complex projective structure in the following way: the unique projective structure on any $\CPun$-fiber can be transported to $\sigma(S)$ by sliding along the leaves. This defines a complex projective structure on $\sigma(S)$ and thus on its $\Pi_{\rho}$ projection $S$.

Reciprocally, if $\rho$ is the holonomy representation associated to a complex projective structure $(\DD,\rho)$ on $S$, then the developing map $\DD$ gives a section $\sigma$ of the fiber bundle associated with $\rho$. This section is defined by:
$$\sigma([\tilde{x}])=[\tilde{x},\DD(\tilde{x})]$$
As $\DD$ is a local homeomorphism, this section is transverse to the horizontal foliation. Recall that a complex projective structure on $S$ naturally induces a complex structure on $S$. Taking the product with the complex structure of the $\CPun$-fibers, we also get a complex structure on $M_{\rho}$. And the section $\sigma:S\longrightarrow M_{\rho}$ is holomorphic (for the two complex structures which have just been defined on $S$ and $M_{\rho}$).

In the sequel, we will denote by $(M_{\rho},\Pi_{\rho},\FF_{\rho})$ the foliation associated with a representation $\rho:\pi_1(S)\to\PSLC$ and we will denote by $(M_{\rho},\Pi_{\rho},\FF_{\rho},\sigma)$ the pair \og foliation/section\fg associated with a complex projective structure $(\DD,\rho)$ on $S$.
 
\paragraph {Compactification.}\label{prel} Let $(M_{\rho},\Pi_{\rho},\FF_{\rho},\sigma)$ be the pair \og foliation/section \fg associated with a parabolic projective structure $(\DD,\rho)$ on $S_{g,n}$. As $S_{g,n}$ is not compact, the total space $M_{\rho}$ and the image of the section $\sigma$ are not compact. Following \cite{CDFG} or \cite{DD}, we are going to explain how we can compactifie the total space, the section and the foliation. The new foliation will have singularities. Recall that for any puncture, there is a neighborhood of this puncture $V=\bigslant{C}{<\gamma>}$, a homeomorphism $\tphi:C\to \Hyp$ satisfying $\tphi\circ\gamma=T_1\circ\tphi$ and a M\"obius transformation $A$ such that the following diagram is commutative:
$$
\begin{CD}
C @>\tphi>> \Hyp\\
@VV\DD V @VV\iota V\\
\CPun @>A>> \CPun
\end{CD}$$
Note that this implies $\rho(\gamma)=A^{-1}\circ T_1\circ A$. Indeed:
$$\rho(\gamma)\circ\DD=\DD\circ\gamma
=A^{-1}\circ\tphi\circ\gamma
=A^{-1}\circ T_1\circ\tphi
=A^{-1}\circ T_1\circ A\circ\DD.$$
Then, the following diagram is also commutative:
$$
\begin{CD}
\bigslant{C\times\CPun}{<\gamma>} @>[\tphi,A]>> \bigslant{\Hyp\times\CPun}{(\tau,z)\sim(\tau+1,z+1)}\\
@VV\Pi_{\rho} V @VV p_1 V\\
\bigslant{C}{<\gamma>} @>\phi=[\tphi]>> \bigslant{\Hyp}{\tau\sim\tau+1}
\end{CD}$$
Moreover, it can be easily checked that $\Phi:=[\tphi,A]$ sends $\FF_{\rho}$ on the horizontal foliation of $\bigslant{\Hyp\times\CPun}{(\tau,z)\sim(\tau+1,z+1)}$ and that $\Phi\circ\sigma=s_0\circ\phi$ where $s_0$ is the diagonal section: $s_0:=[\iota,\iota]: \bigslant{\Hyp}{\tau\sim\tau+1}\longrightarrow \bigslant{\Hyp\times\CPun}{(\tau,z)\sim(\tau+1,z+1)}$.\\

We are going to glue the horizontal foliation of the manifold 
$$\bigslant{\Hyp\times\CPun}{(\tau,z)\sim(\tau+1,z+1)}\sim \D^*\times\CPun$$
with a foliation of $\D\times\CPun$ using the following local model: consider the equation
\begin{equation}\label{eq-localmodel}
\frac{dv}{du}=-\frac{1}{2i\pi u},(u,v)\in \D\times\CPun
\end{equation}
The solutions of this equation define a foliation in $\D\times\CPun$ which is transverse to every fibre $\{u\}\times\CPun$ except the fibre $\{0\}\times\CPun$ which is an invariant projective line with only one singularitie at $(0,\infty)$. The leaf of the foliation through $(u_0,v_0)\in\D^*\times\CPun$ is the graph of the multivalued map:
$$v_{(u_0,v_0)}(u)=-\frac{1}{2i\pi}\log\left(\frac{u}{u_0}\right)+v_0$$
The map 
\begin{eqnarray*}
\bigslant{\Hyp\times\CPun}{(\tau,z)\sim(\tau+1,z+1)}&\longrightarrow&\D^*\times\CPun\\
(\tau,z)&\longmapsto&(e^{2i\pi\tau},z-\tau)
\end{eqnarray*}
is a biholomorphism which preserves the fibres and sends the horizontal foliation to the foliation defined by equation \eqref{eq-localmodel}. So we can glue these foliations together. Moreover, the diagonal section $s_0$ is sent to the constant section equal to $0$.

Doing this for any puncture, one gets a singular foliation $\overline{\FF}$ (with real dimension 2) on a compact manifold $\overline{M_{\rho}}$ (with real dimension 4) which is a $\CPun$-fiber bundle over the compact surface $S_g=\overline{S_{g,n}}$. The foliation is transverse to all the fibres except a finite number which are invariant projective complex lines with one singularitie. Moreover, the section $\sigma:S_{g,n}\rightarrow M_{\rho}$ associated with the developing map can be compactified as a section $\overline{\sigma}: S_g\rightarrow \overline{M_{\rho}}$ transverse to $\overline{\FF}$ and which does not pass through the singularities.

\begin{rem}\label{remarque}
Take any other smooth section $\overline{\sigma'}: S_g\rightarrow \overline{M_{\rho}}$ which is transverse to the foliation $\overline{\FF}$ and which does not pass through the singularities. The restriction $\sigma':=\overline{\sigma'}_{|S_{g,n}}$ defines a complex projective structure on $S_{g,n}$. This (new) complex projective structure is still parabolic. Indeed, for any puncture $p$, the vertical leaf of $\overline{\FF}$ above $p$ intersects transversely $\overline{S}=\overline{\sigma}(S_g)$ and $\overline{S'}=\overline{\sigma'}(S_g)$ in two points $a$ and $a'$. Then, flowing along the leaves naturally defines a diffeomorphism $h$ going from a neighborhood of $a$ in $\overline{S}$ to a neighborhood of $a'$ in $\overline{S'}$. 
The homeomorphism $\Psi:=\Pi_{\rho}\circ h\circ\sigma$ between two punctured neighborhood of $p$ in $S_{g,n}$ can be lifted, at the level of the universal cover, in a $<\gamma>$-equivariant homeomorphism $\widetilde{\Psi}$ which satisfies by construction $\DD'\circ\widetilde{\Psi}=\DD$ for some developing maps $\DD$ and $\DD'$ associated respectively to $\sigma$ and $\sigma'$. Hence, from the definition of a parabolic projective structure (see section \ref{Prel}), if the projective structure associated with $\sigma$ is parabolic, then the one associated with $\sigma'$ is also parabolic.
\end{rem}

\section{$Hol$ is locally injective.}
As the quotient $\RR_{para}^{ne}(S_{g,n})$ of non elementary representations modulo conjugacy is a Hausdorff complex manifold, it is enough to prove that if $(\DD_1,\rho)$ and $(\DD_2,\rho)$ are two parabolic complex projective structures on $S_{g,n}$ with the same holonomy $\rho$ and close enough, then they are equivalent, i.e. there exists $\tphi$ a $\pi_1(S_{g,n})$-equivariant homeomorphism of $\widetilde{S_{g,n}}$ such that $\DD_2=\DD_1\circ\tphi$. 

$(\DD_1,\rho)$ and $(\DD_2,\rho)$ being of parabolic type, for any puncture $p$ and for $i\in\{1,2\}$, one can find neighborhoods $V_i=\bigslant{C_i}{<\gamma>}$ of $p$ and homeomorphisms $\tphi_i:C_i\to\Hyp$ satisfying $\tphi_i\circ\gamma=T_1\circ\tphi_i$ such that the following diagram commutes:

$$
\begin{CD}
C_1 @>\tphi_1>> \Hyp @<\tphi_2<< C_2\\
@VV\DD_1 V @VV\iota V @VV\DD_2 V\\
\CPun @>A_1>> \CPun @<A_2<< \CPun
\end{CD}$$

\begin{lem}\label{lemme}
For all $t\in\R$, denote by $T_t$ the translation $z\mapsto z+t$. If we change the pair $\left(\tphi_2,A_2\right)$ in the previous diagram by $\left(\tphi_2^t,A_2^t\right):=\left(T_t\circ\tphi_2,T_t\circ A_2\right)$, we do not affect anything, i.e.:
\begin{enumerate}
 \item the diagram is still commutative: $A_2^t\circ\DD_2\circ(\tphi_2^t)^{-1}=\iota$
 \item $\tphi_2^t\circ\gamma=T_1\circ\tphi_2^t$.
\end{enumerate}
 Moreover, for one value of $t$, we have $A_2^t=A_1$.
\end{lem}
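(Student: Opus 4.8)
### Proof proposal

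The plan is to verify parts (1) and (2) by direct computation using only the identities $T_t\circ T_1=T_1\circ T_t$ and $T_t\circ T_s=T_{t+s}$ (translations commute), and then to determine the value of $t$ by comparing the two ``boundary'' Möbius transformations $A_1$ and $A_2$ on the image of the developing maps.

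First I would check part (2): since $\tphi_2\circ\gamma=T_1\circ\tphi_2$, composing on the left with $T_t$ gives $T_t\circ\tphi_2\circ\gamma=T_t\circ T_1\circ\tphi_2=T_1\circ T_t\circ\tphi_2$, i.e.\ $\tphi_2^t\circ\gamma=T_1\circ\tphi_2^t$, because $T_t$ and $T_1$ commute. Next, part (1): starting from the hypothesis $A_2\circ\DD_2\circ\tphi_2^{-1}=\iota$, I would write $(\tphi_2^t)^{-1}=\tphi_2^{-1}\circ T_{-t}$, so
$$A_2^t\circ\DD_2\circ(\tphi_2^t)^{-1}=T_t\circ A_2\circ\DD_2\circ\tphi_2^{-1}\circ T_{-t}=T_t\circ\iota\circ T_{-t}.$$
Now $\iota:\Hyp\hookrightarrow\CPun$ is just the inclusion, so $\iota\circ T_{-t}$ is the inclusion of $\Hyp$ precomposed with translation by $-t$, whose image is again $\Hyp$ (as $\Hyp$ is translation-invariant), and $T_t\circ\iota\circ T_{-t}$ is then the identity on $\Hyp$, i.e.\ equals $\iota$ again. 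So the diagram stays commutative.

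For the last assertion I would argue as follows. From the commuting diagram we have $A_1\circ\DD_1=\iota\circ\tphi_1$ on $C_1$ and $A_2\circ\DD_2=\iota\circ\tphi_2$ on $C_2$; since the two projective structures have the same holonomy $\rho$, restricting to a loop around $p$ both $A_1$ and $A_2$ conjugate $\rho(\gamma)$ to $T_1$, so $A_1\circ A_2^{-1}$ commutes with $T_1$ and is therefore itself a translation $T_c$ for some $c\in\C$ (an element of $\PSLC$ commuting with the parabolic $T_1$ is a translation). Replacing $A_2$ by $A_2^t=T_t\circ A_2$ replaces $T_c=A_1\circ A_2^{-1}$ by $A_1\circ (A_2^t)^{-1}=A_1\circ A_2^{-1}\circ T_{-t}=T_{c-t}$. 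The point to nail down is that $c$ is actually \emph{real}: this follows because $\tphi_1$ and $\tphi_2$ are both homeomorphisms onto $\Hyp$ commuting with $T_1$, so (comparing $A_1\circ\DD_1=\iota\circ\tphi_1$ with $A_1\circ A_2^{-1}\circ A_2\circ\DD_2=T_c\circ\iota\circ\tphi_2$, and using that $\DD_1,\DD_2$ develop the \emph{same} puncture so their images near $p$ agree up to the identification) the map $\tphi_1\circ\tphi_2^{-1}:\Hyp\to\Hyp$ differs from $z\mapsto z+c$ by a self-map of $\Hyp$; hence $T_c$ preserves $\Hyp$, forcing $c\in\R$. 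Then choosing $t=c$ gives $A_1\circ(A_2^t)^{-1}=T_0=\mathrm{Id}$, i.e.\ $A_2^t=A_1$.

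I expect the main obstacle to be exactly the reality of $c$ — i.e.\ making precise why the discrepancy between the two boundary charts is a \emph{real} translation rather than an arbitrary complex one. Parts (1) and (2) are purely formal manipulations with commuting translations; the content is in identifying $A_1A_2^{-1}$ as a real translation, which uses that both $\tphi_i$ are homeomorphisms onto the \emph{same} half-plane $\Hyp$ intertwining $\gamma$ with the \emph{same} translation $T_1$, together with the parabolicity normalization of Definition~\ref{defpara}. Once that is in place, setting $t=c$ finishes the lemma.
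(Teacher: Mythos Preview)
Your verifications of (1) and (2) are correct and more explicit than the paper's, which simply calls them trivial. For the last assertion your core argument coincides with the paper's: since $A_1$ and $A_2$ both conjugate $\rho(\gamma)$ to $T_1$, the element $A_1A_2^{-1}$ centralizes the parabolic $T_1$ and is therefore a translation $T_c$; taking $t=c$ yields $A_2^t=A_1$. The paper phrases this in conjugate form, working with $A^t:=A_1^{-1}A_2^t$ and noting that $A_1^{-1}A_2$ commutes with $\rho(\gamma)$, hence is parabolic with fixed point $A_2^{-1}(\infty)$ and so equals $A_2^{-1}T_{-t}A_2$ for some $t$; but the content is identical.

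Where your proposal breaks down is the paragraph arguing $c\in\R$. The claim that ``$\DD_1,\DD_2$ develop the same puncture so their images near $p$ agree up to the identification'' is unjustified: $\DD_1$ and $\DD_2$ are developing maps of \emph{different} projective structures sharing only the holonomy, and their images $\DD_1(C_1)=A_1^{-1}(\Hyp)$ and $\DD_2(C_2)=A_2^{-1}(\Hyp)$ are two horodisks at the fixed point of $\rho(\gamma)$ with no reason to coincide; consequently there is no map $\tphi_1\circ\tphi_2^{-1}:\Hyp\to\Hyp$ available to force $T_c$ to preserve $\Hyp$. In fact $c$ need not be real, and the paper's own proof makes no attempt to show it is: it stops exactly at the existence of \emph{some} $t$ with $A_2^{-1}T_tA_2=(A_1^{-1}A_2)^{-1}$. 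For the subsequent use of the lemma (defining $\tphi=(\tphi_2^t)^{-1}\circ\tphi_1$), a complex $t$ is harmless after shrinking the neighborhoods, since $\Hyp$ and $T_t(\Hyp)$ always overlap in a half-plane. So the ``main obstacle'' you anticipated is a non-issue; drop the reality argument and your proof matches the paper's.
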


\begin{proof}
Items 1 and 2 are trivial. In order to prove the last assertion of the lemma, we denote $A^t:=A_1^{-1}\circ A_2^t$. Then, $A^t=(A_1^{-1}\circ A_2)\circ (A_2^{-1}\circ T_t\circ A_2)$. 
The matrix $A_1^{-1}\circ A_2$ commutes with $\rho(\gamma)$. Moreover $\rho(\gamma)$ is parabolic with $A_2^{-1}(\infty)$ as only fixed point. We deduce that $A_1^{-1}\circ A_2$ is either the identity or is parabolic with $A_2^{-1}(\infty)$ as only fixed point. Then for one value of $t$, we have: $A_2^{-1}\circ T_t\circ A_2=(A_1^{-1}\circ A_2)^{-1}$.
\end{proof}

Choosing the value of $t$ satisfying the previous lemma, we obtain the following commutative diagram:

$$\xymatrix{
C_1 \ar[r]^{\tphi_1} \ar[rd]_{\DD_1} &\Hyp \ar[d]^{\iota} &C_2 \ar[l]_{\tphi_2} \ar[ld]^{\DD_2}\\
&\CPun}$$

The map $\tphi:C_1\to C_2$ defined by $\tphi=\tphi_2^{-1}\circ\tphi_1$ is $<\gamma>$-equivariant and satisfies $\DD_1=\DD_2\circ\tphi$. We can continue $\tphi$ to all $proj^{-1}(V_1)=\bigsqcup_{\alpha\in\pi_1(S_{g,n})}\alpha\cdot C_1$ in the following way: for $x\in C_1$ and $\alpha\in\pi_1(S_{g,n})$, we define $\tphi(\alpha\cdot x)=\alpha\cdot\tphi(x)$. Then, we get a $\pi_1(S_{g,n})$-equivariant homeomorphism $\tphi:proj^{-1}(V_1)\to proj^{-1}(V_2)$ which still satisfies the equality $\DD_1=\DD_2\circ\tphi$\footnote{Lemma \ref{lemme} was important to keep the equality satisfied}.

The previous construction can be done for any puncture $p$. We denote, for $i\in\{1,2\}$, by $V_i(p)=C_i(p)/<\gamma>$ the neighborhoods of the puncture $p$ and $\tphi_p:proj^{-1}(V_1(p))\to proj^{-1}(V_2(p))$ the homeomorphisms as in the previous part. Let $V_1'(p)=C_1'(p)/<\gamma>$ be a smaller neighborhood of $p$ (i.e. $\overline{V_1'(p)}\subset V_1(p)$) and take $C_2'(p)=\tphi_p(C_1'(p))$ and $V_2'(p)=proj(C_2'(p))$. Denote also by $K_i=S_{g,n}\setminus\bigsqcup_p V_i'(p)$ and by $\widetilde{K_i}=proj^{-1}(K_i)$. We are going to prove that there is a $\pi_1(S_{g,n})$-equivariant homeomorphism $\tpsi:\widetilde{K_1}\to \widetilde{K_2}$ satisfying $\DD_1=\DD_2\circ\tpsi$.

For this, let $(M_{\rho},\Pi_{\rho},\FF_{\rho})$ be the foliation associated with the representation $\rho$. The developing maps $\DD_1$ and $\DD_2$ induce two sections $\sigma_1$ and $\sigma_2$ of the bundle.
As the $K_i$ are compact, if $(\DD_1,\rho)$ and $(\DD_2,\rho)$ are close enough, then there exists a holonomy map $h:\sigma_1(K_1)\to \sigma_2(K_2)$ close to the identity. Now, the map $\psi:K_1\to K_2$ defined by $\psi=\Pi_{\rho}\circ h\circ\sigma_1$ can be lifted uniquely in a $\pi_1(S_{g,n})$-equivariant homeomorphism $\tpsi:\widetilde{K_1}\to \widetilde{K_2}$ satisfying $\DD_1=\DD_2\circ\tpsi$.

To conclude, we just have to prove that for all the punctures $p$, the maps $\tphi_p$ and $\tpsi$ coincide on their intersection $proj^{-1}(V_1(p)\setminus V_1'(p))$. Let $x\in C_1(p)\setminus C_1'(p)$, we have:
$$\DD_1(x)=\DD_2(\tphi_p(x))=\DD_2(\tpsi(x)).$$
As $\tphi_p(x)$ and $\tpsi(x)$ belong to $C_2(p)\setminus C_2'(p)$ and $\DD_2$ is injective on $C_2(p)$, we deduce that $\tphi_p(x)=\tpsi(x)$. As the maps $\tphi_p$ and $\tpsi$ are both $\pi_1(S_{g,n})$-equivariant, we deduce that they coincide on $proj^{-1}(V_1(p)\setminus V_1'(p))$.\\

We have seen in section \ref{paradim} that $\PP_{para}(S_{g,n})$ and $\RR_{para}^{ne}(S_{g,n})$ are two complex manifolds with the same dimension and that $Hol$ is a holomorphic map. We deduce that $Hol$ is open. We have just proved that $Hol$ is locally injective. Hence $Hol$ is a local biholomorphism and theorem \ref{theoprincipal} is proved. Nevertheless, in this last section, we are going to give a geometric proof of the fact that $Hol$ is open without using the fact that it is holomorphic.

\section{$Hol$ is open.}
Let $(\DD_0,\rho_0)$ be a parabolic complex projective structure on $S_{g,n}$. We have to prove that, if $\rho$ is close enough to $\rho_0$, then $\rho$ is the holonomy representation of a parabolic complex projective structure.

In the case where the surface is compact, the proof is classical (see for example \cite[Theorem 1.3]{LM} or \cite[Proposition 1.7.1]{Me}), the idea being the following: for every perturbation $\rho$ of $\rho_0$, construct the suspended foliation $(M_{\rho},\Pi_{\rho},\FF_{\rho})$ associated to $\rho$. If $\rho$ is close to $\rho_0$, then $(M_{\rho},\Pi_{\rho},\FF_{\rho})$ is \og close \fg to $(M_{\rho_0},\Pi_{\rho_0},\FF_{\rho_0})$ and the section $\sigma_0$ of $(M_{\rho_0},\Pi_{\rho_0},\FF_{\rho_0})$ associated to the developing map $\DD_0$ (transverse to $\FF_0$) can be transported to define a section $\sigma$ of $(M_{\rho},\Pi_{\rho},\FF_{\rho})$ which is still transverse to $\FF_{\rho}$. This section defines a developing map $\DD$ with holonomy $\rho$. 

Trying to adapt this proof to our case, we are faced with two difficulties. Firstly, the base surface is not compact anymore. Hence the perturbated section $\sigma$ could be not transverse to the foliation anymore even if the perturbation is arbitrarily small. Secondly, the projective structure associated with the perturbated section could be not parabolic anymore. The idea of the proof is to compactifie the bundles as explained in section \ref{prel}.


More precisely, let $U_{\rho_0}$ be a contractible neighborhood of $\rho_0$ in the set of all parabolic representations of $\pi_1(S_{g,n})$. The group $\pi_1(S_{g,n})$ acts on $\widetilde{S_{g,n}}\times\CPun\times U_{\rho_0}$ by:
$$\alpha\cdot(\widetilde{x},z,\rho)\longmapsto(\alpha\cdot\widetilde{x},\rho(\alpha)\cdot z,\rho).$$
The quotient is a $\CPun$-bundle $E$ over $S_{g,n}\times U_{\rho_0}$. It is endowed with a foliation: the one whose leaves are the quotients of the sets $\widetilde{S_{g,n}}\times\{z\}\times \{\rho\}$. For every $\rho\in U_{\rho_0}$, the restriction $E_{\rho}$ of $E$ over $S_{g,n}\times\{\rho\}$ is the suspension of $\rho$.

As $(\DD_0,\rho_0)$ is parabolic, for any puncture, there is a neighborhood $\bigslant{C}{<\gamma>}$, a homeomorphism $\tphi:C\to \Hyp$ satisfying $\tphi\circ\gamma=T_1\circ\tphi$ and a M\"obius transformation $A_0$ such that:
$$A_0\circ\DD_0\circ\tphi^{-1}=\iota:\Hyp\hookrightarrow\CPun.$$

For every $\gamma\in\pi_1(S_{g,n})$ which makes one loop around a puncture and for every $\rho\in U_{\rho_0}$, the M\"obius transformation $\rho(\gamma)$ is parabolic. So, we can find an analytic family of M\"obius transformations $\{A_{\rho};\rho\in U_{\rho_0}\}$ satisfying $A_{\rho_0}=A_0$ and $A_{\rho}\circ\rho(\gamma)\circ A_{\rho}^{-1}=T_1$. Then we have the following commutative diagram:

$$
\begin{CD}
\bigslant{C\times\CPun\times U_{\rho_0}}{<\gamma>} @>[\tphi,A_{\rho},id]>> \bigslant{\Hyp\times\CPun}{(\tau,z)\sim(\tau+1,z+1)}\times U_{\rho_0}\\
@VVV @VVV\\
\bigslant{C}{<\gamma>}\times U_{\rho_0} @>([\tphi],id)>> \bigslant{\Hyp}{\tau\sim\tau+1}\times U_{\rho_0}
\end{CD}$$

We can compactifie using the local model explained in the preliminaries. We get a $\CPun$-fiber bundle $\overline{E}$ over $S_g\times U_{\rho_0}$. By hypothesis, the compactified section $\overline{\sigma_0}$ over $S_g\times\{\rho_0\}$ is transverse to the foliation and does not meet any singularities. The open set $U_{\rho_0}$ being contractible, by the universal property of the fibre bundles, $\overline{\sigma_0}$ can be extended to a global section of $\overline{E}$ over $S_g\times U_{\rho_0}$. For $\rho$ close enough to $\rho_0$, the restriction $\overline{\sigma_{\rho}}$ of this global section to $S_g\times\{\rho\}$ is still transverse to the foliation and does not meet any singularities. Then, as explained in the preliminaries (see in particular remark \ref{remarque}), the restriction $\sigma_{\rho}$ of $\overline{\sigma_{\rho}}$ to $S_{g,n}\times\{\rho\}$ defines a complex projective structure of parabolic type on $S_{g,n}$ with holonomy $\rho$.


\end{document}